\documentclass[11pt,letterpaper]{amsart}
\usepackage[T1]{fontenc}
\usepackage{lmodern}
\usepackage{amsmath,amssymb,amsthm,mathtools}
\usepackage{microtype}
\usepackage{needspace}
\usepackage[hidelinks]{hyperref}
\hypersetup{pdftitle={Nondeformability of Moishezon Manifolds with Nonnegative Kodaira Dimension},pdfauthor={Mu-Lin Li}}
\numberwithin{equation}{section}
\newtheorem{theorem}{Theorem}[section]
\newtheorem{proposition}[theorem]{Proposition}
\newtheorem{lemma}[theorem]{Lemma}

\theoremstyle{definition}

\theoremstyle{remark}

\newtheorem{example}[theorem]{Example}
\newtheorem{quest}[theorem]{Question}
\newcommand{\X}{\mathcal X}
\newcommand{\Y}{\mathcal Y}
\newcommand{\D}{\Delta}
\newcommand{\Ds}{\Delta^*}
\newcommand{\Exc}{\operatorname{Exc}}

\newcommand{\codim}{\operatorname{codim}}

\newcommand{\Cyc}{\mathcal C}

\title[Nondeformability of Moishezon  manifolds]{Nondeformability of Moishezon manifolds with nonnegative Kodaira dimension}
\author{Mu-Lin Li}
\address{School of Mathematics, Hunan University, China}
\email{mulin@hnu.edu.cn}
\thanks{The author is supported by NSFC (Nos.~12271073 and 12271412).}
\keywords{Global nondeformability, isotrivial family, Moishezon morphism, relative cycle space, Kodaira dimension}
\subjclass[2020]{14D15, 14E30, 14J10, 32G05}

\date{}

\begin{document}

\begin{abstract}
Let $\pi:\X\to\D$ be a proper holomorphic submersion with connected
fibers, and suppose that $X_t$ is biholomorphic to a fixed smooth
Moishezon  manifold $S$ for every $t\ne0$. If $\kappa(S)\ge0$, we prove
that $X_0\simeq S$.
\end{abstract}

\maketitle

\section{Introduction}

Let $\D\subset\mathbb C$ be a disk centered at $0$, and write
$\Ds=\D\setminus\{0\}$. Throughout, a smooth family over $\D$ means
a proper holomorphic submersion with connected compact fibers. We
consider a smooth family
\[
\pi:\X\longrightarrow\D,\qquad X_t=\pi^{-1}(t),
\]
whose fibers have complex dimension $n\ge1$.

Siu proposed several interesting problems recently. One of them is the following global nondeformability.
\begin{quest}
Given a smooth family $\pi:\X\to \Delta$ such that $X_t$ is biholomorphic to a given compact complex manifold $S$ for $t\neq0$, the global nondeformability problem is to show that $X_0$ must also be biholomorphic to $S$.
\end{quest}
Affirmative results are known in several geometric settings.
Wright's work \cite{Wr77} yields global nondeformability for compact
hyperbolic manifolds. More recently, the author--Rao--Wang \cite{lrw} proved
that, in a smooth family over a disk, the locus of fibers isomorphic
to a fixed compact hyperbolic manifold is either discrete or the
entire disk. On the rational side, Siu \cite{Siu0} established global
nondeformability for complex projective space, and Hwang \cite{Hw}
proved the corresponding result for smooth hyperquadrics of dimension
at least three. Using the theory of varieties of minimal rational
tangents, Hwang--Mok \cite{HN1,HN2,HN3} established rigidity under
K\"ahler deformations for broad classes of rational homogeneous
manifolds of Picard number one. For projective manifolds with
semiample canonical bundle, the author--Liu \cite{LL26} proved global
nondeformability under a K\"ahler morphism hypothesis, and
the author--Rao--Wang \cite{LRW} obtained a stronger statement concerning the
locus of isomorphic fibers in a smooth K\"ahler family. The author \cite[Corollary~1.7]{Li26}
subsequently proved global nondeformability for non-uniruled compact
K\"ahler manifolds in families whose fibers are all K\"ahler.

The main result of this paper establishes global nondeformability
for smooth Moishezon manifolds of nonnegative Kodaira dimension.

\begin{theorem}\label{thm:main}Let $\pi:\X\to\D$ be a smooth family.
Suppose that $S$ is a smooth connected Moishezon manifold such that
\[
\kappa(S)\ge0,\qquad X_t\simeq S\quad(t\in\Ds).
\]
Then $X_0\simeq S$.
\end{theorem}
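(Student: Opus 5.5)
Here is the plan. First I build a bimeromorphic map between $X_0$ and $S$ by taking the limit of graphs. Then I use $\kappa(S)\ge0$ to upgrade it to a biholomorphism.

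\emph{Step 1: the trivialization over $\Ds$ and a limiting cycle.} The family is isotrivial over $\Ds$. By Fischer--Grauert type arguments, after a finite base change $t\mapsto t^m$ we may assume the restriction $\X|_{\Ds}$ is trivial, i.e.\ $\X|_{\Ds}\simeq S\times\Ds$ over $\Ds$. The reason is that the monodromy lies in $\operatorname{Aut}(S)$. Because $\kappa(S)\ge0$, $S$ is not uniruled, so $\operatorname{Aut}^0(S)$ is a compact torus or trivial, and the holonomy can be killed after a finite cover. Only the part in $\operatorname{Aut}(S)/\operatorname{Aut}^0(S)$ matters, and handling it is part of the obstacle below. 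Base change preserves the hypotheses and the conclusion.

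Let $\Gamma_t\subset X_t\times S$ be the graph of the isomorphism, and consider the closure $\overline{\Gamma}$ of $\bigcup_{t\ne0}\Gamma_t$ in $\X\times S$. One checks that $\X$ is a Moishezon morphism: fibers over $\Ds$ are Moishezon, and $X_0$ is Moishezon by upper semicontinuity of algebraic dimension for limits of Moishezon manifolds in smooth families, at least after passing to the relative cycle space. The key claim is that the volumes of the $\Gamma_t$ are bounded, with respect to a hermitian metric, as $t\to0$. Granting this, Bishop's theorem shows that $\overline{\Gamma}$ is an analytic subvariety and that the limit cycle $\Gamma_0=\sum m_iZ_i$ exists in the relative cycle space of $\X\times S\to\D$. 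Exactly one component $Z_1$ maps bimeromorphically onto both $X_0$ and $S$, since the degree over each factor is $1$. This gives a bimeromorphic map $\phi:X_0\dashrightarrow S$.

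\emph{Step 2: volume bound, the main obstacle.} The graphs' volume is $\int_{\Gamma_t}(\omega_{\X}+\omega_S)^n$. This decomposes into mixed terms $\int_{X_t}\omega^k\wedge f_t^*\omega_S^{n-k}$, and these must stay bounded. Since $S$ is Moishezon and not compact Kähler in general, I would pass to a projective modification $\hat S\to S$ and work with pullbacks of an ample class. I would control the mixed terms cohomologically using the local constancy of $H^*(X_t)$ and the fact that $f_t^*$ acts via the flat Gauss--Manin connection. Because of that flat action, the classes are locally constant modulo the monodromy ambiguity.

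The non-Kähler issue on $X_0$ is where I expect the most difficulty. I would first try using the relative cycle space of the Moishezon morphism $\X\times S\to\D$, whose components are proper over $\D$ (Fujiki). Since the $\Gamma_t$ lie in a single irreducible family of cycles after the base change, properness yields the limit directly.

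\emph{Step 3: upgrading to an isomorphism using $\kappa(S)\ge0$.} Let $p:Z_1\to X_0$ and $q:Z_1\to S$ be the projections; the other components $Z_i$ ($i\ge2$) lie over proper subvarieties of $X_0$ or of $S$. I would show $\phi$ is biholomorphic by comparing pluricanonical sections. Since $\kappa\ge0$, take $m$ with $H^0(mK_S)\ne0$. Plurigenera are bimeromorphic invariants, and a nonzero section $\sigma_t$ varies holomorphically in the trivialized family, with limit $\sigma_0\ne0$ on $X_0$.

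The argument then runs as follows:
\begin{itemize}
\item Since $\Gamma_0$ is the specialization of graphs, $[\Gamma_0]_*$ pushes $K_{X_0}$-type forms to $K_S$-type forms compatibly. This shows that any exceptional divisor $E$ of $p$ or $q$ that maps to a divisor must appear with matching discrepancies on both sides.
\item Using the non-uniruledness of $S$ (equivalent here to $\kappa\ge0$ for Moishezon manifolds via pseudo-effectivity of $K$), the exceptional loci of $q$ are covered by rational curves. Each positive-dimensional fiber of $q$ over $S$ gives, via $p$, rational curves in $X_0$, and the family $\Gamma_t$ deforms them off to $X_t\simeq S$, where no such rational curves through general points exist.
\end{itemize}
Symmetrically, $p$ contracts nothing that $q$ does not contract. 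Hence $\Gamma_0=Z_1$ is the graph of a biholomorphism $X_0\simeq S$: the extra components $Z_i$ would be filled by uniruled subvarieties and contribute to the cohomology class of the graph, and that class must equal the class of a graph by flatness.
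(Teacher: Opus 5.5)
You have a genuine gap, and it sits exactly where $\kappa(S)\ge0$ has to be used. In Step~1 you assert that a single component $Z_1$ of the limit cycle maps bimeromorphically onto both $X_0$ and $S$ ``since the degree over each factor is $1$''. Continuity of direct images only gives $(\mathrm{pr}_1)_*\Gamma_0=[X_0]$ and $(\mathrm{pr}_2)_*\Gamma_0=[S]$. That yields a unique component dominating $X_0$ and a unique component dominating $S$, each of multiplicity and degree one, but nothing forces them to be the same component. The component over $X_0$ can be contracted to a proper subvariety of $S$ while another component dominates $S$, and this really happens. In Example~\ref{ex:hirzebruch}, if the two components coincided, the family map $F:\X\dashrightarrow S\times\D$ would be locally biholomorphic off a subset of codimension $\ge2$ in $\X$. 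The last part of the proof of Proposition~\ref{prop:criterion} (pull back $\partial/\partial t$, extend by Hartogs, integrate the flow) would then give $\mathbb F_2\simeq\mathbb P^1\times\mathbb P^1$. So the map $\phi$ you later try to upgrade need not exist without further input. Step~3 does not supply that input: it presupposes that $Z_1$ dominates both factors. Its rational-curve argument also fails on its own terms. Non-uniruledness of $S$ only excludes rational curves through a \emph{general} point, whereas exceptional loci lie in proper subvarieties, and rational curves on $X_0$ need not deform to $X_t$. (Also, only $\kappa\ge0\Rightarrow$ non-uniruled is known; the converse is the nonvanishing conjecture.)

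The missing idea is to use the plurigenus you already have ($\sigma_0\ne0$, i.e.\ $P_{m_0}(X_0)>0$) against the exceptional divisors over the smooth total space $S\times\D$, not against $S$. Resolve $F^{-1}$ by blow-ups of $S\times\D$ with compact centres in the central fibre. Every exceptional divisor is bimeromorphic to a $\mathbb P^{c-1}$-bundle with $c\ge2$ and has all plurigenera zero. The strict transform of $X_0$ has $P_{m_0}>0$, so it is not contracted, which is precisely the statement that the component over $X_0$ also dominates $S$. After that no separate biholomorphicity argument is needed: $F$ is locally biholomorphic off a codimension-two set, and the Hartogs-extended field $(dF)^{-1}(\partial/\partial t)$ trivializes the family.

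Your construction of the limit also has holes. The finite-order question for the monodromy in $\pi_0(\mathrm{Aut}(S))$ is left open. Even given a trivialization over $\Ds$, properness of a cycle-space component does not make a holomorphic section over $\Ds$ extend. Composing with a map $\Ds\to\mathrm{Aut}^0(S)$ that has an essential singularity at $0$ (e.g.\ translations of an abelian variety) produces graphs with no limit. The paper avoids this by using countability of components to find one that surjects onto $\D$, then taking an arc through a point over $0$; this gives the base change and the limit at once. Finally, Moishezonness of $X_0$ is not a consequence of semicontinuity of algebraic dimension, which fails in general. It is the theorem that smooth limits of Moishezon manifolds are Moishezon, and you still need Koll\'ar's theorem to make $\pi$ locally Moishezon so that Fujiki's properness applies.
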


\begin{example}\label{ex:hirzebruch}\cite[(8.1)Theorem (iv)]{BPV}
There is a smooth projective family over a disk whose noncentral
fibers are $\mathbb P^1\times\mathbb P^1$ and whose central fiber
is the Hirzebruch surface $\mathbb F_2$.
\end{example}

By \cite[Theorem~1.5]{Rao26}, the deformation limit of Moishezon manifolds is Moishezon. Thus every fiber
of $\pi$ is Moishezon. Koll\'ar's theorem
\cite[Corollary~22]{Kollar} then implies that $\pi$ is locally
Moishezon: after shrinking the base near any point, it is
bimeromorphic over the base to a projective morphism. The deformation limit of projective manifolds is Moishezon can also find in \cite[Theorem~1.5]{Li24}.  Consequently,
Theorem~\ref{thm:main} follows from the following relative statement.

\begin{theorem}\label{thm:local-moishezon}
Let $\pi:\X\to\D$ be a smooth family, locally Moishezon near $0$.
Suppose that $S$ is smooth connected Moishezon manifold, that
$\kappa(S)\ge0$, and that $X_t\simeq S$ for all $t\ne0$.
Then $X_0\simeq S$.
\end{theorem}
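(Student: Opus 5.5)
The plan is to build a bimeromorphic map from $X_0$ to $S$ that is a biholomorphism, by extending the isotriviality over $\Ds$ across $0$ with a relative cycle space (or relative Douady/Barlet space) argument. First, after a finite base change $t\mapsto t^m$ if needed, I would trivialize the family over $\Ds$. The isomorphism type is constant on $\Ds$, so $\X|_{\Ds}\to\Ds$ is an analytic fibre bundle with fibre $S$ and structure group $\mathrm{Aut}(S)$. Since $S$ is Moishezon with $\kappa(S)\ge0$, $S$ is not uniruled, so $\mathrm{Aut}^0(S)$ has no linear part and is a compact torus. The monodromy in $\pi_0\mathrm{Aut}(S)$ should have finite image, using its action on $H^*(S,\mathbb Z)$ and on a polarization of a projective model; a base change then kills it. After that, the bundle over the punctured disk becomes trivial up to the torus part, which over a contractible-up-to-$S^1$ base I would handle by a further base change. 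So I expect an isomorphism $\Phi:S\times\Ds\xrightarrow{\sim}\X|_{\Ds}$ over $\Ds$.

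Next I would take the graph $\Gamma^*\subset S\times\X$ of $\Phi$, a family of $n$-cycles over $\Ds$ in the relative product $(S\times\D)\times_\D\X$. The key is to show that its closure $\Gamma$ is analytic and defines a limit cycle $\Gamma_0\subset S\times X_0$, that is, that the volumes of $\Gamma_t$ are bounded as $t\to0$. Here local Moishezonness is used. By Theorem~\ref{thm:local-moishezon}'s hypothesis, after shrinking, $\X$ is bimeromorphic over $\D$ to a projective family, so there are relatively Kähler-like classes coming from a relatively big line bundle $L$ on $\X$. The volume of $\Gamma_t$ with respect to $\omega_S+\omega_{\X}$ is a sum of intersection numbers $\int_S\omega_S^{a}\wedge\Phi_t^*\omega^{n-a}$. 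These are bounded if $\Phi_t^*[\omega]$ stays in a bounded set of $H^{1,1}(S)$. After the monodromy reduction, $\Phi_t^*$ acts on cohomology by a fixed map, which gives boundedness topologically. Bishop's theorem then yields $\Gamma_0$.

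Finally I would show that $\Gamma_0$ is the graph of a biholomorphism. Write $\Gamma_0=Z+\sum E_i$, where $Z$ is the unique component dominating both factors and the $E_i$ are projected to lower-dimensional sets. Then $Z$ induces a bimeromorphic map $f:S\dashrightarrow X_0$, and each $E_i$ is covered by images of rational curves, by an argument in the style of Mori's bend-and-break or Koll\'ar's degeneration lemma. Because $\kappa(S)\ge0$, hence $K_S$ is pseudoeffective on Moishezon models and $S$ is not uniruled, no such components exist over $S$. Since $\kappa(X_0)=\kappa(S)\ge0$ by deformation invariance of plurigenera in Moishezon families, the same holds over $X_0$. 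Thus $f$ has no exceptional loci on either side and is an isomorphism in codimension one. An isomorphism in codimension one between smooth Moishezon manifolds that is a limit of isomorphisms preserves $K$, and the graph being flat with irreducible fibre should force $f$ biholomorphic by Zariski's main theorem.

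I expect the main obstacle to be the last step: ruling out exceptional components and upgrading a codimension-one isomorphism to a biholomorphism. Example~\ref{ex:hirzebruch} shows that exactly this step fails when $\kappa<0$. My first attempt would be to show that any positive-dimensional fibre of $Z\to X_0$ or $Z\to S$ is rationally chain connected, via Abhyankar/Kollár's result on exceptional loci of birational maps to smooth varieties. I would then show that it deforms to rational curves on nearby $S$ that cover a divisor, contradicting nonuniruledness.
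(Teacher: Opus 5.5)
There are two genuine gaps: the extension step as stated can fail, and the final step cannot be right because it never uses the product structure.

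\textbf{Extension step.} Bounded volumes of the individual $\Gamma_t$ do not make the closure of $\bigcup_{t\ne0}\Gamma_t$ analytic; they only give limit cycles along subsequences. Your trivialization $\Phi$ is determined only up to a holomorphic map $\Ds\to\operatorname{Aut}^0(S)$, and such a map can have an essential singularity at $0$. For example, take $S$ an elliptic curve, $\X=S\times\D$, and compose with translation by the image of $e^{1/t}$. The graphs then have constant volume, but by Picard their closure contains all of $S\times X_0$, so it is not analytic. The isomorphisms therefore have to be \emph{selected}, and nothing in your argument selects them.

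Your volume bound is also not topological here. On Moishezon fibres there is no closed Hermitian form, and homology does not bound volumes: Hironaka's threefold carries a nonzero effective curve homologous to zero. For the same reason, your claim that the monodromy in $\pi_0\operatorname{Aut}(S)$ has finite order is unsupported; in the K\"ahler case it would follow from Fujiki--Lieberman.

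The paper bypasses all of this in Proposition~\ref{prop:graph-selection}. Components of the relative Barlet space of $\X\times_\D(S\times\D)$ are proper over a slightly smaller disk, by Barlet--Magn\'usson's theorem for weakly K\"ahlerian (e.g.\ Moishezon) morphisms. Countability of components gives one containing uncountably many graphs, hence surjecting onto $\D$. An arc in a resolution of that component, through a point over $0$, then produces both the base change $s\mapsto s^N$ and a family of graphs that extends meromorphically by construction.

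\textbf{Final step.} A component $E_i$ of $\Gamma_0$ with lower-dimensional images in both factors can be swept out by rational curves without making $S$ or $X_0$ uniruled. A uniruled divisor is harmless: a blown-up abelian threefold has $\kappa=0$. So your fallback of deforming rational curves to cover a divisor gives no contradiction. Likewise, an isomorphism in codimension one that preserves $K$ need not be biholomorphic (flops), and irreducibility or flatness of the limit graph is exactly what is in question, so Zariski's main theorem has nothing to act on.

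Concretely, Huybrechts' non-separated hyperk\"ahler families give smooth families $\X,\X'\to\D$ isomorphic over $\Ds$. Their central fibres are birational, of Kodaira dimension $0$, and not isomorphic, and the limit cycle is $Z+\sum E_i$ with $E_i$ of precisely the type you try to exclude. Every ingredient of your last step applies to them, so the step proves a false statement.

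What is missing is any use of the fact that one side is the \emph{product} $S\times\D$. Proposition~\ref{prop:criterion} needs only that the strict transform of $X_0$ is not exceptional over $S\times\D$. This follows from two facts. First, $P_{m_0}(X_0)\ge P_{m_0}(S)>0$, by upper semicontinuity; invariance of plurigenera is not needed. Second, every exceptional divisor of a sequence of blow-ups with smooth centres has all plurigenera zero. It follows that $F$ is a local biholomorphism off a codimension-two subset of $\X$. The field $(dF)^{-1}(\partial/\partial t)$ then extends across that subset by Hartogs, and its flow trivializes $\X$ near $0$. No analysis of the $S$-side is required.
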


The proof of Theorem~\ref{thm:local-moishezon} has two steps.
First, Proposition~\ref{prop:criterion} treats a family equipped with
a bimeromorphic map over $\D$,
\[
F:\X\dashrightarrow S\times\D,
\]
which is biholomorphic over $\Ds$. The positivity of one plurigenus
of $X_0$ prevents its strict transform from being exceptional on the
product side. The resulting local product structure outside an
analytic subset of codimension at least two in $\X$ supplies a
horizontal vector field. Hartogs' theorem extends this field, and
its holomorphic flow trivializes the family near $0$.

Second, Proposition~\ref{prop:graph-selection} constructs such a
meromorphic product after a finite base change. We use the relative
Barlet space of cycles in $\X\times_\D(S\times\D)$ to parametrize
graphs of biholomorphisms $X_t\to S$. After restricting to a relatively
compact disk inside a disk over which the family is Moishezon,
Fujiki's properness result applies to its cycle-space components.
An analytic arc in a resolved parameter space then gives a finite
base change $\tau:\D_s\to\D$ and a bimeromorphic map
\[
F:\X\times_\D\D_s\dashrightarrow S\times\D_s
\]
that is biholomorphic over $\D_s^*$. Upper semicontinuity of a single
plurigenus completes the argument.

\subsection*{Notation and conventions}
For a smooth compact complex manifold $Z$ and an integer $m>0$, set
\[
P_m(Z)=h^0(Z,K_Z^{\otimes m}).
\]
Thus $\kappa(Z)\ge0$ is equivalent to $P_m(Z)>0$ for some $m>0$.
Meromorphic maps are understood in the analytic sense, with proper
graphs. For a proper modification $f$ between smooth manifolds,
$\Exc(f)$ denotes the locus in its source on which it is not an
isomorphism.

\section{Moishezon and modification preliminaries}\label{sec:prelim}

\subsection{Moishezon limits}

We need the following theorem on deformation limits of Moishezon manifolds.
\begin{theorem}[{\cite[Theorem~1.5]{Rao26}}]\label{thm:limit}
Let $f:M\to\D$ be a proper holomorphic submersion with connected
fibers. If $M_t$ is Moishezon for every $t\ne0$, then $M_0$ is
Moishezon.
\end{theorem}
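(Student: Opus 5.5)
The plan is to produce a single holomorphic line bundle $\mathcal L$ on (a shrinking of) $M$ whose restriction to uncountably many fibers $M_t$ is big in a uniform way. Upper semicontinuity of $h^0$ then transports bigness to $M_0$. Recall that a compact complex manifold $Z$ of dimension $n$ is Moishezon if and only if it carries a big line bundle, i.e.\ one with $\limsup_m h^0(Z,L^{\otimes m})/m^n>0$.

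\textbf{Step 1: a single topological class.} After shrinking $\D$, Ehresmann's theorem identifies $M$ smoothly with $M_0\times\D$, so $H^2(M_t,\mathbb Z)\cong H^2(M,\mathbb Z)$ for all $t$, and this group is countable. For each $t\ne0$ choose a big line bundle $L_t$ on $M_t$. For each $t$ there is some $k\in\mathbb N$ with $\limsup_m h^0(M_t,L_t^{m})/m^n\ge 1/k$. Since $\Ds$ is uncountable and there are only countably many pairs $(c,k)\in H^2(M,\mathbb Z)\times\mathbb N$, there is a fixed pair $(c,k)$ and an uncountable set $T\subset\Ds$ such that for every $t\in T$ we have $c_1(L_t)=c|_{M_t}$ and the $\limsup$ above is at least $1/k$.

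\textbf{Step 2: globalizing the line bundle.} This is the step I expect to be the main obstacle, because the fibers are not assumed K\"ahler and no Hodge decomposition is available. Consider the image $\sigma$ of $c$ under $H^2(M,\mathbb Z)\to H^2(M,\mathcal O_M)\cong H^0(\D,R^2f_*\mathcal O_M)$; the last isomorphism holds because $\D$ is Stein. The sheaf $R^2f_*\mathcal O_M$ is coherent by Grauert. Outside a discrete set, it is locally free and commutes with base change. For $t\in T$, the class $c|_{M_t}$ is the Chern class of a line bundle, so it dies in $H^2(M_t,\mathcal O)$; hence $\sigma$ vanishes in the fiber at uncountably many points. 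By the identity theorem $\sigma$ is a torsion section of a coherent sheaf on the disk. After shrinking $\D$ around $0$ and dividing out torsion, I will try to arrange $\sigma=0$ in $H^2(M,\mathcal O_M)$, possibly after replacing $c$ by a multiple to kill a finite torsion ambiguity. The exponential sequence on $M$ then gives $\mathcal L\in\operatorname{Pic}(M)$ with $c_1(\mathcal L)=c$. Two line bundles on $M_t$ with the same Chern class differ by an element of $\operatorname{Pic}^0(M_t)$. To get $\mathcal L|_{M_t}\cong L_t$ I will instead choose $\mathcal L$ and then replace the bundles $L_t$: repeat the countability argument with classes taken in the image of $\operatorname{Pic}(M)$. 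Alternatively, I will use that $H^1(M_t,\mathcal O)$ is the fiber of the coherent sheaf $R^1f_*\mathcal O$, which lets the $\operatorname{Pic}^0$-twist be absorbed into a global twist over a suitable neighborhood. If this twisting becomes awkward, a fallback is to avoid line bundles altogether. In that approach, the point $[L_t]$ is replaced by the relative Douglas/Barlet space of effective divisors in the class $c$, and Fujiki-type arguments produce a family of divisors.

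\textbf{Step 3: semicontinuity.} For each $m$, the function $t\mapsto h^0(M_t,\mathcal L^{m}|_{M_t})$ is upper semicontinuous and equals its generic minimum outside a proper analytic, hence discrete, subset $B_m\subset\D$. The union $B=\bigcup_m B_m$ is countable, so we can pick $t_0\in T\setminus B$. Then for every $m$,
\[
h^0(M_0,\mathcal L^{m}|_{M_0})\ \ge\ h^0(M_{t_0},\mathcal L^{m}|_{M_{t_0}}),
\]
so $\limsup_m h^0(M_0,\mathcal L^{m}|_{M_0})/m^n\ge 1/k>0$. Thus $\mathcal L|_{M_0}$ is big, and $M_0$ is Moishezon.
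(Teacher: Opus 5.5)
The paper does not prove this statement; it is quoted from Rao's work, so your outline has to stand on its own. It has a genuine gap at Step~2.

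Steps~1 and~3 are sound once a global $\mathcal L$ exists. Your $\operatorname{Pic}^0$-twisting worry is also unnecessary: on the Moishezon fibers $M_t$, $t\ne0$, the volume of a line bundle depends only on its Chern class (check this on a projective modification), so any $\mathcal L$ with $c_1(\mathcal L)=c$ would serve.

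The gap is the claim that $\sigma=0$ in $H^2(M,\mathcal O_M)$. Your argument shows only that $\sigma$ is an $\mathcal O_\D$-torsion section of $R^2f_*\mathcal O_M$, which after shrinking is supported at $0$. From $0\to\mathcal O_M\xrightarrow{t}\mathcal O_M\to\mathcal O_{M_0}\to0$, the $t$-torsion of $R^2f_*\mathcal O_M$ at $0$ is the cokernel of $R^1f_*\mathcal O_M\otimes\mathbb C_0\to H^1(M_0,\mathcal O_{M_0})$. This cokernel is nonzero as soon as some $(0,1)$-class on $M_0$ fails to extend, e.g.\ when $h^{0,1}$ jumps at $0$. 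Nothing in your argument excludes this, since $M_0$ is not yet known to be K\"ahler or Moishezon.

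None of your remedies removes the torsion:
\begin{itemize}
\item Shrinking $\D$ around $0$ keeps the stalk at $0$.
\item Replacing $c$ by $mc$ replaces $\sigma$ by $m\sigma$, which is nonzero whenever $\sigma$ is, because $H^2(M,\mathcal O_M)$ is a $\mathbb C$-vector space. $\mathcal O_\D$-torsion is not $\mathbb Z$-torsion.
\item Passing to the torsion-free quotient does not help, because the exponential sequence needs vanishing in $H^2(M,\mathcal O_M)$ itself.
\end{itemize}
What you actually obtain is a line bundle on $f^{-1}(\Ds)$ with Chern class $c$, and there is no reason for it to extend across the divisor $M_0$.

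This is exactly where the geometry of the central fiber has to enter. The paper makes the same point after Theorem~\ref{thm:kollar}: there, local freeness of $R^2f_*\mathcal O_M$ comes from Hodge decomposition on \emph{every} fiber, including the central one. As written, your Steps~1--3 prove the statement only under an extra hypothesis making $R^2f_*\mathcal O_M$ torsion-free at $0$, for instance no jump of $h^{0,1}$ at $0$. That is only a conditional version of the theorem.

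Your fallback via relative Douglas or Barlet spaces of divisors hits the same obstruction in another form. To select a component proper over a neighborhood of $0$, you need uniform volume bounds near $M_0$. The properness theorems of Fujiki and Barlet supply these only for K\"ahler or weakly K\"ahlerian (e.g.\ Moishezon) morphisms, and that hypothesis is unavailable near $M_0$ at this stage. The paper itself invokes such properness only after the family is known to be Moishezon near $0$. A genuinely new input controlling $M_0$ is missing, and supplying it is the real content of the cited theorem, not a technicality.
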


The fiberwise conclusion is converted into a relative one by the
following theorem.

\begin{theorem}[{\cite[Corollary~22]{Kollar}}]
\label{thm:kollar}
A smooth proper morphism of normal irreducible complex analytic
spaces whose fibers are Moishezon is locally Moishezon.
\end{theorem}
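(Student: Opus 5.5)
The plan is to build, over a small neighborhood $U$ of any base point, a line bundle on the total space whose restriction to the fibers is big. The sections of a high multiple of it then give a relative meromorphic map to $\mathbb P^N\times U$ that is bimeromorphic onto its image. Write $f:M\to B$ for the smooth proper morphism and fix $b_0\in B$. Since $f$ is smooth, it is topologically locally trivial. After shrinking $B$ to a contractible Stein neighborhood $U$ of $b_0$, we may identify $H^2(M_U,\mathbb Z)\simeq H^2(M_b,\mathbb Z)$ for all $b\in U$. Each fiber $M_b$ is a smooth Moishezon manifold. Hence it carries a big line bundle, whose class $c_b\in H^2(M_b,\mathbb Z)$ lies in the kernel of $H^2(M_b,\mathbb Z)\to H^2(M_b,\mathcal O)$. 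For Moishezon manifolds the Hodge-to-de Rham spectral sequence degenerates, so this kernel is exactly the integral $(1,1)$-classes.

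The first step is a Baire category argument on $U$. For a fixed class $c\in H^2(M_U,\mathbb Z)$, the locus $Z_c=\{b\in U: c|_{M_b}\text{ maps to }0\text{ in }H^2(M_b,\mathcal O)\}$ is a closed analytic subset of $U$. Indeed, it is the vanishing locus of a holomorphic section of the bundle $R^2f_*\mathcal O$; this bundle is locally free, because degeneration of the Hodge spectral sequence makes the Hodge numbers constant. Because every fiber is Moishezon, $U=\bigcup_c Z_c$ is a countable union. Refining further, let $Z_{c,m}$ be the analytic subset (by semicontinuity) where in addition $h^0(M_b,L_c^{m})\ge \epsilon m^n$ for a fixed rational $\epsilon>0$. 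Then $U$ is still a countable union of such sets. By Baire, and since $U$ is irreducible, some $Z_{c}$ has interior and is therefore all of $U$.

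The second step turns the fiberwise class into an honest line bundle $L$ on $M_U$. The image of $c$ in $H^2(M_U,\mathcal O)=H^0(U,R^2f_*\mathcal O)$ vanishes, using that $U$ is Stein, the Leray spectral sequence, and the vanishing of $c$ fiberwise. So the exponential sequence gives a line bundle $L$ on $M_U$ with $c_1(L)=c$. Restricted to each $M_b$, it differs from the chosen big bundle only by an element of $\mathrm{Pic}^0(M_b)$. Bigness on a Moishezon manifold depends only on the numerical class, so $L|_{M_b}$ is big for all $b$. Alternatively, it suffices that it be big for a general $b$, which follows from the Baire argument above.

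Finally, $f_*L^{m}$ is coherent on $U$. For $m\gg0$ the relative evaluation map gives a meromorphic map $\Phi:M_U\dashrightarrow\mathbb P(f_*L^m)$ over $U$, and over a dense Zariski open subset of $U$ it is bimeromorphic onto its image on the fibers. Taking the closure of the graph, whose irreducibility follows from the normality and irreducibility of $M_U$, yields the required bimeromorphism over $U$ with a projective $U$-space, possibly after shrinking $U$. The main obstacle I expect is the first step. One must make sure that the Hodge-theoretic loci $Z_c$ are genuinely analytic with constant Hodge numbers for non-Kähler Moishezon fibers. One must also ensure that bigness of a fixed topological class holds uniformly on a neighborhood rather than only on a dense set. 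If degeneration is problematic, I would instead run the Baire argument on components of the relative Douady space of divisors, using their properness over the base.
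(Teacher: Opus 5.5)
Your route is the one the paper indicates. Hodge theory on the Moishezon fibers makes $R^2f_*\mathcal O_M$ locally free and compatible with base change (you get this from constancy of Hodge numbers plus Grauert, the paper from surjectivity of $H^2(M_b,\mathbb C)\to H^2(M_b,\mathcal O)$), and then a relative line-bundle criterion is applied. Your Leray/exponential-sequence step over a Stein base and the $\operatorname{Pic}^0$-invariance of bigness are correct.

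The gap is in selecting the class, which is the real content of Koll\'ar's criterion. You need a class that is big on a non-meager set of fibers, and the Baire argument as written does not produce one:
\begin{itemize}
\item The sets $Z_c$ cover $U$ for trivial reasons: $Z_0=U$, and when $h^{0,2}=0$ every $Z_c$ equals $U$. So ``some $Z_c$ is all of $U$'' says nothing about bigness.
\item The refinement $Z_{c,m}$ uses $h^0(M_b,L_c^m)$ before any $L_c$ has been constructed. On a single fiber a bundle of class $c$ is defined only up to $\operatorname{Pic}^0(M_b)$, and $h^0$ is not invariant under that ambiguity.
\item A bound $h^0\ge\epsilon m^n$ for one value of $m$ does not imply bigness.
\end{itemize}
Hence the claim in your second step that $L|_{M_b}$ is big for \emph{every} $b$ is unsupported. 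A big bundle of class $c$ exists on $M_b$ only where $c$ happens to be a big class, and bigness of a fixed class is not open in families. In Example~\ref{ex:hirzebruch} every $Z_c$ is the whole disk. The class $\sigma+f$ on $\mathbb F_2$ (negative section plus fiber) is big, but its transport to $\mathbb P^1\times\mathbb P^1$ is a nonzero class of square $0$. That is a multiple of a ruling, so it is not big.

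To close the gap, proceed as follows.
\begin{enumerate}
\item Let $\mathcal C$ be the set of $c$ with $Z_c=U$. For $c\notin\mathcal C$, $Z_c$ is a proper analytic subset of the irreducible $U$, hence nowhere dense.
\item For $c\in\mathcal C$, your second step gives $L_c$ on $M_U$. For rational $\epsilon>0$ and $m_0\ge1$ put
\[
Z_{c,\epsilon,m_0}=\bigcap_{m\ge m_0}\bigl\{b\in U:\ h^0\bigl(M_b,L_c^{m}|_{M_b}\bigr)\ge\epsilon m^n\bigr\},
\]
which is closed analytic by upper semicontinuity.
\item By Kodaira's lemma on a projective modification, $L_c|_{M_b}$ is big exactly when $b$ lies in some $Z_{c,\epsilon,m_0}$.
\item Every $b$ outside the meager set $\bigcup_{c\notin\mathcal C}Z_c$ carries a big bundle whose class lies in $\mathcal C$. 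By $\operatorname{Pic}^0$-invariance, these countably many closed sets therefore cover a comeager subset of $U$.
\item Baire gives one of them with nonempty interior, so it equals $U$, and $L_c$ is big on every fiber.
\end{enumerate}

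Your last step needs the same care, because $m$ must be chosen uniformly. Take $b$ very general: outside the countably many proper analytic sets where $h^0(M_{b'},L^m)$ jumps or where the relative map $\Phi_m$ has non-generic fiberwise image dimension or degree. Then choose $m$ with $\Phi_{|mL_b|}$ bimeromorphic; this forces $\Phi_m$ to be bimeromorphic onto its image.
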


In the proof of Theorem~\ref{thm:kollar}, the Hodge decomposition on
smooth Moishezon fibers yields the surjectivity required to make
$R^2f_*\mathcal O_M$ locally free; Koll\'ar's relative line-bundle
criterion then gives local Moishezonness. Thus
Theorem~\ref{thm:kollar} is precisely the bridge between the
fiberwise conclusion of Theorem~\ref{thm:limit} and the relative
cycle-space argument below.

\subsection{Pluricanonical forms and exceptional divisors}

\begin{lemma}\label{lem:birational-pluri}
If $f:W\to Z$ is a proper modification between smooth compact complex
manifolds, pullback induces an isomorphism
\[
H^0(Z,K_Z^{\otimes m})\xrightarrow{\sim}
H^0(W,K_W^{\otimes m})\qquad(m>0).
\]
Consequently, plurigenera are invariant under bimeromorphic maps
between smooth compact complex manifolds.
\end{lemma}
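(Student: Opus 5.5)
We prove the isomorphism in two halves, injectivity and surjectivity of pullback; invariance of plurigenera then follows formally. First we define the map. Since $f$ is a proper modification between smooth manifolds, there is a nowhere dense analytic subset $E\subset Z$ with $f:W\setminus f^{-1}(E)\to Z\setminus E$ biholomorphic. We may take $E$ to be the image of $\Exc(f)$; because $Z$ is smooth, hence normal, $E$ has codimension at least two in $Z$. The holomorphic map $f$ has a Jacobian section $J_f\in H^0(W,K_W\otimes f^*K_Z^{-1})$, which is nonzero since $f$ is generically biholomorphic. For $\sigma\in H^0(Z,K_Z^{\otimes m})$ we set
\[
f^*\sigma := J_f^{\otimes m}\cdot f^*\sigma \in H^0(W,K_W^{\otimes m}),
\]
which in local coordinates is the usual pullback of $m$-fold forms. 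This assignment is linear.

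For injectivity, note that if $f^*\sigma=0$, then $\sigma$ vanishes on the dense open set $Z\setminus E$, over which $f$ is biholomorphic. Hence $\sigma=0$.

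For surjectivity, take $\eta\in H^0(W,K_W^{\otimes m})$. On $Z\setminus E$ define $\sigma:=(f^{-1})^*\eta$; this is a holomorphic section of $K_Z^{\otimes m}$ over $Z\setminus E$. Since $E$ has codimension $\ge2$ and $K_Z^{\otimes m}$ is locally free on the smooth manifold $Z$, Hartogs' extension theorem, applied to the local coefficient functions, extends $\sigma$ to a global section of $K_Z^{\otimes m}$ over $Z$. The two sections $f^*\sigma$ and $\eta$ of $K_W^{\otimes m}$ agree on the dense open set $W\setminus f^{-1}(E)$, so by the identity theorem $f^*\sigma=\eta$.

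The only delicate point is the codimension bound on $E$. If one prefers not to quote normality, use Zariski's main theorem in the analytic form: for a proper bimeromorphic holomorphic map onto a normal space, the set over which the fibers are positive-dimensional, equivalently the indeterminacy locus of $f^{-1}$, has codimension $\ge2$, and $f^{-1}$ is holomorphic off it. That is the statement actually needed here.

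For the consequence, let $\phi:X\dashrightarrow Y$ be a bimeromorphic map between smooth compact complex manifolds. Resolve the graph of $\phi$ by Hironaka's resolution of singularities to obtain a smooth compact $W$ with proper modifications $p:W\to X$ and $q:W\to Y$. Applying the first part to $p$ and to $q$ gives
\[
P_m(X)=h^0(W,K_W^{\otimes m})=P_m(Y).
\]
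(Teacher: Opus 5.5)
Your proposal is correct and follows essentially the same route as the paper. Injectivity comes from $f$ being an isomorphism over a dense open set, and surjectivity from extending $(f^{-1})^*\eta$ across a codimension-$\ge2$ non-isomorphism locus in $Z$ by Hartogs and then using the identity theorem. Bimeromorphic invariance then follows from a common resolution; your appeal to normality and Zariski's main theorem just spells out the codimension bound that the paper asserts (and records later in Lemma~\ref{lem:modification}).
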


\begin{proof}
Pullback is injective because $f$ is an isomorphism on a dense open
set. Choose an analytic subset $C\subset Z$ of codimension at least
two such that $f$ is an isomorphism over $Z\setminus C$. A section
of $K_W^{\otimes m}$ determines a pluricanonical form on $Z\setminus C$.
Its coefficient in each local trivialization of $K_Z^{\otimes m}$ is
holomorphic, and extends across $C$ by Hartogs' theorem. Uniqueness
ensures that the extensions glue. The pullback of the extended form
agrees with the original section on a dense open set and hence
everywhere. A common resolution proves bimeromorphic invariance.
\end{proof}

\begin{lemma}\label{lem:exceptional}
Let $q:W\to M$ be a finite succession of blow-ups with smooth compact
centers of codimension at least two, with $M$ smooth. If $E$ is a
$q$-exceptional prime divisor and $\widehat E$ is a smooth compact
model of $E$, then
\[
P_m(\widehat E)=0\qquad(m>0).
\]
\end{lemma}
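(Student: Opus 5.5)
We argue by induction on the number of blow-ups, reducing to the case where $E$ is the strict transform of the exceptional divisor of one blow-up. Write $q=q_1\circ\cdots\circ q_k$, with $q_i:M_i\to M_{i-1}$ the blow-up of $M_{i-1}$ along a smooth compact center $C_{i-1}$ of codimension $c_i\ge2$, and $M_0=M$, $M_k=W$. A $q$-exceptional prime divisor $E\subset W$ is contracted by $q$ to a set of codimension at least two. Since each $q_i$ is an isomorphism off its exceptional divisor $E_i=\mathbb P(N_{C_{i-1}/M_{i-1}})$, there is a unique index $i$ for which the image of $E$ in $M_i$ is a divisor while its image in $M_{i-1}$ is not. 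For this $i$, the image of $E$ in $M_i$ is the prime divisor $E_i$, and $E$ is the strict transform of $E_i$ under $q_{i+1}\circ\cdots\circ q_k$. In particular $E$ is bimeromorphic to $E_i$. By Lemma~\ref{lem:birational-pluri}, plurigenera of smooth compact models are bimeromorphic invariants, so $P_m(\widehat E)=P_m(E_i)$ with $E_i$ itself already smooth and compact. It therefore suffices to prove $P_m(E_i)=0$.

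Now $E_i\to C_{i-1}$ is a $\mathbb P^{c_i-1}$-bundle with $c_i-1\ge1$, so $E_i$ is covered by rational curves, namely the fibre lines $\ell\simeq\mathbb P^1$. We compute $K_{E_i}\cdot\ell$ directly. The relative Euler sequence gives $K_{E_i/C_{i-1}}|_{F}\simeq\mathcal O_{\mathbb P^{c_i-1}}(-c_i)$ on each fibre $F$. Since $K_{C_{i-1}}$ pulls back trivially to $F$, we get $K_{E_i}|_F\simeq\mathcal O(-c_i)$, and hence $\deg K_{E_i}|_\ell=-c_i<0$.

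Suppose $s\in H^0(E_i,K_{E_i}^{\otimes m})$ is nonzero. Its zero locus $Z(s)$ is a proper analytic subset, so a general fibre line $\ell$ is not contained in $Z(s)$. Then $s|_\ell$ is a nonzero section of $\mathcal O_{\mathbb P^1}(-mc_i)$, which is impossible since this bundle has negative degree. Hence $P_m(E_i)=0$ for all $m>0$.

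The main obstacle is the first step: making precise that a $q$-exceptional prime divisor is the strict transform of exactly one intermediate exceptional divisor. This needs a short argument. The images of $E$ form a decreasing chain of irreducible analytic sets $q_{j+1}\circ\cdots\circ q_k(E)\subset M_j$. The set $\{j : \text{the image is a divisor}\}$ contains $k$ and does not contain $0$. Let $i$ be its minimum. The image of $E$ in $M_i$ is then an irreducible divisor contained in $\Exc(q_i)=E_i$, and since $E_i$ is irreducible, the image equals $E_i$. For each $j>i$, the map $q_j$ is an isomorphism generically along the relevant divisor, because that divisor is not contained in $E_j$ (its image in $M_{j-1}$ is a divisor, not a subset of $C_{j-1}$). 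So the composite map $E\to E_i$ is bimeromorphic.
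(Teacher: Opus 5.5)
Your proof is correct and follows essentially the same route as the paper. You identify $E$ as the strict transform of the exceptional divisor created at a single step. You then use $K_{E_i}|_F\simeq\mathcal O_{\mathbb P^{c_i-1}}(-c_i)$ on the fibres to rule out nonzero pluricanonical forms (you restrict to fibre lines, the paper to whole fibres), and transfer the conclusion to $\widehat E$ via Lemma~\ref{lem:birational-pluri}.

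One small repair is needed: a smooth center need not be connected or equidimensional, so $E_i$ need not be irreducible. As the paper does, replace $E_i$ by the component $\mathbb P(N_{C/M_{i-1}})$ over the relevant connected component $C$ of $C_{i-1}$; your argument then goes through verbatim.
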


\begin{proof}
The divisor $E$ is the strict transform of an exceptional prime
divisor created at one step. If $C_j\subset M_{j-1}$ is the connected
component of the corresponding center, the divisor created at that
step is
\[
E_j=\mathbb P(N_{C_j/M_{j-1}}).
\]
Set $c_j=\codim_{M_{j-1}}C_j\ge2$. On each fiber
$P\simeq\mathbb P^{c_j-1}$ of $E_j\to C_j$, the cotangent sequence gives
\[
K_{E_j}|_P\simeq K_P\simeq\mathcal O_{\mathbb P^{c_j-1}}(-c_j).
\]
Every section of $K_{E_j}^{\otimes m}$ therefore restricts to zero on
every fiber and hence vanishes identically. The compactness assumption
on the center ensures that $E_j$ is compact. Subsequent blow-ups alter
its strict transform only bimeromorphically, so $E_j$ and
$\widehat E$ are bimeromorphic. Lemma~\ref{lem:birational-pluri}
proves the assertion.
\end{proof}

We also recall the following standard properties of the
non-isomorphism locus of a modification.
\begin{lemma}\label{lem:modification}
Let $f:W\to Z$ be a proper modification between connected smooth
complex manifolds. Then $\Exc(f)$ is the zero locus of the Jacobian
determinant of $f$, and is a union of divisors unless empty.
Its image has codimension at least two in $Z$. For every prime
divisor $H\subset Z$, there is exactly one prime divisor in $W$
dominating $H$, namely its strict transform.
\end{lemma}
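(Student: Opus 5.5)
We prove the three assertions of Lemma~\ref{lem:modification} in order, each by a local computation combined with Zariski's main theorem (or its analytic form for proper modifications). First, the non-isomorphism locus. Since $W$ and $Z$ have the same dimension, $f$ is a local biholomorphism exactly where the Jacobian determinant $J_f$ is nonzero. Let $U=W\setminus\{J_f=0\}$, and suppose $f$ is an isomorphism near $w$; then clearly $J_f(w)\ne0$. Conversely, assume $J_f(w)\neq 0$. Then $f$ is a local biholomorphism at $w$. Because $f$ is proper and bimeromorphic, the generic fiber is a single point, so $f$ has degree one. Moreover $f^{-1}(f(w))$ is compact, and its connected components are points or positive-dimensional.

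The key input is Zariski's main theorem: for a proper modification onto a normal (here smooth) base, all fibers are connected. Given connectedness, the fiber through $w$ is exactly $\{w\}$, since $w$ is an isolated point of it. Properness then gives a neighborhood $V$ of $f(w)$ with $f^{-1}(V)$ inside the local-biholomorphism chart around $w$. Hence $f$ is an isomorphism over $V$, and $\Exc(f)=\{J_f=0\}$. This set is locally the zero set of one holomorphic function that is not identically zero, since $f$ is generically an isomorphism. It is therefore either empty or of pure codimension one, which is the divisor statement.

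Second, the codimension of the image. The set $f(\Exc(f))$ is analytic by Remmert's proper mapping theorem. Suppose it contained a divisor $H$. Over a general point $z\in H$, the space $Z$ is smooth and the fiber $f^{-1}(z)$ is connected. If $f^{-1}(z)$ were finite it would be a single point, so $f$ would be finite near $z$; a finite bimeromorphic map onto a normal space is an isomorphism, contradicting $z\in f(\Exc(f))$. Hence over $H$ the fibers of $\Exc(f)\to f(\Exc(f))$ are positive-dimensional. Then every component of $\Exc(f)$ dominating $H$ would have dimension at least $\dim H+1=n$, which is impossible because $\Exc(f)$ has dimension $n-1$. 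Thus $\codim f(\Exc(f))\ge2$.

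Third, divisors dominating $H$. Given a prime divisor $H\subset Z$, the image $f(\Exc(f))$ has codimension at least two and therefore cannot contain $H$. So $f$ is an isomorphism over a dense open subset $H^\circ$ of $H$, and $f^{-1}(H^\circ)$ is irreducible with closure equal to the strict transform $\widetilde H$. Any other prime divisor $D\subset W$ with $f(D)=H$ must meet $f^{-1}(H^\circ)$ in a dense open set. Since $D$ and $\widetilde H$ are irreducible of the same dimension, this forces $D=\widetilde H$. Divisors contained in $\Exc(f)$ map into sets of codimension at least two and so do not dominate $H$.

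The main obstacle is the first step, namely showing that $J_f(w)\ne0$ implies $f$ is an isomorphism near $w$, which needs connected fibers. I would invoke the analytic Zariski main theorem: a proper bimeromorphic map onto a normal complex space has connected fibers, equivalently $f_*\mathcal O_W=\mathcal O_Z$. Everything else follows from dimension counting.
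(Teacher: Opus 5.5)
Your proposal is correct and follows the paper's route: you use connected fibers from Zariski's main theorem (the paper says Stein factorization plus normality) to identify $\Exc(f)$ with the Jacobian zero locus, and the strict transform over the locus where $f$ is an isomorphism for the last claim. The only difference is the codimension-two assertion: the paper takes it from the standard fact that a modification of a normal space is an isomorphism off a codimension-two subset of the target, whereas you prove it by a fiber-dimension count against $\dim\Exc(f)=n-1$. That count works, although it shows only that \emph{some} (not every) component of $\Exc(f)$ dominating $H$ would have dimension $\ge n$, which is all you need.
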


\begin{proof}
A modification of a normal space is an isomorphism away from a
closed analytic subset of codimension at least two in the target.
Its fibers are connected, by Stein factorization and normality.
Where the Jacobian is nonzero, $f$ is a local biholomorphism and the
point is isolated in its fiber. Connectedness makes this fiber a
singleton. Properness allows one to shrink a neighborhood of its
image so that no other points of $W$ occur over that neighborhood;
the local inverse then shows that $f$ is an isomorphism there.
Conversely, the Jacobian is nonzero at every point of the isomorphism
locus. Since it is a nonzero holomorphic section of
$K_W\otimes f^*K_Z^{-1}$, its zero set is either empty or purely
divisorial. Its image is contained in the codimension-at-least-two
non-isomorphism locus in $Z$. Finally, over a general point of a prime
divisor $H\subset Z$, the map is an isomorphism; the closure of its
inverse image there is the strict transform of $H$ and is the unique
prime divisor dominating $H$.
\end{proof}

\section{Removing a meromorphic product structure}

\begin{proposition}\label{prop:criterion}
Let $\pi:\X\to\D$ be a proper holomorphic submersion with connected
fibers, and let $S$ be a smooth compact connected complex manifold.
Suppose that there exists a bimeromorphic map over $\D$
\[
F:\X\dashrightarrow S\times\D
\]
which is biholomorphic over $\Ds$. If $P_{m_0}(X_0)>0$ for some
$m_0>0$, then $\X\to\D$ is holomorphically trivial near $0$, and
$X_0\simeq S$.
\end{proposition}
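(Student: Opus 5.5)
We resolve the indeterminacy of $F$ on the product side and compare the two central fibers through a common model. Let $\Gamma\subset\X\times_\D(S\times\D)$ be the closure of the graph of $F|_{\pi^{-1}(\Ds)}$, and choose, after shrinking $\D$, a finite succession of blow-ups $p:\mathcal W\to\X$ with smooth centers lying over $0$ such that $q=F\circ p:\mathcal W\to S\times\D$ is holomorphic. By Hironaka/Chow-type elimination of indeterminacy over a small disk, $q$ can be taken to be a proper modification; after further blow-ups with smooth centers we may also arrange that $q$ factors through a succession of blow-ups of $S\times\D$ with smooth centers. This is possible since any modification of a smooth space is dominated by one. Both $p$ and $q$ are isomorphisms over $\Ds$. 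The strict transform $\widetilde X_0\subset\mathcal W$ of $X_0$ is a prime divisor. Its smooth model is bimeromorphic to $X_0$, so by Lemma~\ref{lem:birational-pluri} it has $P_{m_0}>0$.

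The key step is to show that $q(\widetilde X_0)=S\times\{0\}$. Since $q^{-1}(S\times\{0\})$ is the divisor $q^*(t)$, its components are the strict transform of $S\times\{0\}$ and $q$-exceptional divisors. If $\widetilde X_0$ were $q$-exceptional, Lemma~\ref{lem:exceptional}, applied to the blow-up factorization dominating $q$, would give $P_m(\widehat{\widetilde X_0})=0$ for all $m$. This contradicts $P_{m_0}(X_0)>0$. Hence $\widetilde X_0$ is the strict transform of $S\times\{0\}$, by the last assertion of Lemma~\ref{lem:modification}. I expect this step to be the main obstacle. The issue is ensuring that Lemma~\ref{lem:exceptional} applies, meaning the resolving model must be a blow-up tower over $S\times\D$ with compact centers, together with the fact that divisors exceptional for a dominating tower stay exceptional. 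I would address it by first building the tower over $S\times\D$ (all centers lie over $0$, hence are compact) and then blowing up further on the $\X$ side, so that $\mathcal W$ maps to both.

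It follows that $F$ is a local biholomorphism near the generic point of $X_0$. Indeed, $p$ and $q$ are isomorphisms near generic points of $\widetilde X_0$: $\widetilde X_0$ is not $p$-exceptional, and it is not $q$-exceptional. Therefore the bimeromorphic map $F$ is biholomorphic from $\X\setminus Z$ onto its image, where $Z$ is an analytic subset with $\codim_\X Z\ge2$. Here $Z$ is the union of $p(\Exc(p))$, which has codimension $\ge2$ by Lemma~\ref{lem:modification}, and the image of $p(\Exc(q))\cap\widetilde X_0$, which is a proper analytic subset of $X_0$.

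On $\X\setminus Z$ we pull back the vector field $\partial/\partial t$ from $S\times\D$ via $F$. This gives a holomorphic vector field $v$ with $\pi_*v=\partial/\partial t$. By Hartogs' theorem, applied to coefficients in local charts, $v$ extends holomorphically to all of $\X$, and $\pi_*v=\partial/\partial t$ still holds by continuity. Since $\pi$ is proper, the flow of $v$ exists for small times near $X_0$ and maps fibers to fibers. Integrating it gives a trivialization $\X|_{\D'}\simeq X_0\times\D'$ over a smaller disk, so $X_0\simeq X_t\simeq S$ for small $t\ne0$.
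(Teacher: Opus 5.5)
Your proposal is correct and follows essentially the paper's proof. Your final fix is exactly the paper's construction: resolve $F^{-1}$ by a tower of blow-ups of $S\times\D$ with compact centers over $0$, use Lemma~\ref{lem:exceptional} together with $P_{m_0}(X_0)>0$ to force the strict transform of $X_0$ onto $S\times\{0\}$, then extend $(dF)^{-1}(\partial/\partial t)$ across a codimension-two set by Hartogs and integrate its flow. The only difference is cosmetic: you bound the bad set by $p(\Exc(q)\cap\widetilde X_0)$ through a dimension count (write it that way, not $p(\Exc(q))\cap\widetilde X_0$), whereas the paper shows that $p(E)$ has codimension $\ge2$ for each $q$-exceptional $E$, using uniqueness of the divisor dominating $X_0$.
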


\begin{proof}
Write $\Y=S\times\D$ and shrink $\D$ around $0$ when necessary.
Analytic elimination of indeterminacy, followed by embedded
resolution, gives a resolution of $F^{-1}:\Y\dashrightarrow\X$
from the product side by blow-ups with smooth centers; see
\cite[(1.1.1)]{Fujiki82} for analytic elimination of indeterminacy
and \cite{BierstoneMilman} for resolution and principalization in
the complex analytic category. We obtain a smooth connected
manifold $W$ and a diagram
\begin{equation}\label{eq:resolution}
q:W\longrightarrow\Y,\qquad
p:W\longrightarrow\X,\qquad F\circ p=q,
\end{equation}
where $q$ is a finite succession of blow-ups with smooth centers and
$p$ is a proper modification; both maps are over $\D$. Since
$F^{-1}$ is already an isomorphism over $\Ds$, elimination of
indeterminacy can be performed without changing that open set. Thus
all centers lie in the central fibers. The resolution sequence is
finite after shrinking around the compact central fiber. Every
intermediate space is proper over $\D$, hence the centers are compact. Blow-ups of smooth
Cartier divisors are identities and are omitted, so all nontrivial
centers have codimension at least two. After additional embedded
resolution, again supported over $0$, the strict transform $D$ of
$X_0$ under $p$ is smooth.

The divisor $D$ is the unique prime divisor of $W$ dominating $X_0$
under $p$, and $p|_D:D\to X_0$ is a modification. Thus
\begin{equation}\label{eq:positive-pluri}
P_{m_0}(D)=P_{m_0}(X_0)>0.
\end{equation}
If $D$ were $q$-exceptional, Lemma~\ref{lem:exceptional} would
contradict \eqref{eq:positive-pluri}. Hence $q(D)$ is a divisor.
As it is contained in $S\times\{0\}$, it equals that divisor.
Lemma~\ref{lem:modification} then also shows that both $p$ and $q$
are isomorphisms near a general point of $D$.

Define
\begin{equation}\label{eq:A}
A=p\bigl(\Exc(p)\cup\Exc(q)\bigr)\subset X_0.
\end{equation}
This is closed analytic by properness. Lemma~\ref{lem:modification}
gives $\codim_\X p(\Exc(p))\ge2$. If $E$ is a $q$-exceptional prime
divisor, then $E$ lies over $0$, so $p(E)\subset X_0$. If $p(E)$ were
a divisor in $\X$, it would equal $X_0$; the uniqueness assertion in
Lemma~\ref{lem:modification} would then give $E=D$, contrary to the
fact that $D$ is not $q$-exceptional. Thus $p(E)$ has codimension at
least two for every such $E$. Since the exceptional locus of the
composite of blow-ups $q$ is the union of its exceptional prime
divisors, we obtain
\begin{equation}\label{eq:codim}
\codim_\X A\ge2.
\end{equation}
Here the codimension is computed in the smooth total space $\X$.
On $\X\setminus A$, the inverse of $p$ is holomorphic and its image
avoids $\Exc(q)$. Consequently $F=q\circ p^{-1}$ is holomorphic and
locally biholomorphic there.

Pull back the product vector field by setting
\begin{equation}\label{eq:V}
V=(dF)^{-1}\left(\frac{\partial}{\partial t}\right)
\quad\text{on }\X\setminus A.
\end{equation}
In a local holomorphic frame of $T_\X$, its coefficients are
holomorphic functions. Equation~\eqref{eq:codim} and Hartogs'
extension theorem extend them across $A$; uniqueness makes the local
extensions compatible. Hence $V$ is a holomorphic vector field on
$\X$. Since $F$ preserves the base, the identity theorem gives
\begin{equation}\label{eq:horizontal}
d\pi(V)=\frac{\partial}{\partial t}
\quad\text{on all of }\X.
\end{equation}

Let $\Phi_\tau$ be the local holomorphic flow of $V$. It satisfies
\begin{equation}\label{eq:flow-base}
\pi(\Phi_\tau(x))=\pi(x)+\tau.
\end{equation}
Compactness of $X_0$ and local existence with holomorphic dependence
for ordinary differential equations give a neighborhood of $X_0$
and a common $\varepsilon>0$ on which $\Phi_\tau$ is defined for
$|\tau|<\varepsilon$. Hence
\[
H:X_0\times\D_\varepsilon\longrightarrow\X|_{\D_\varepsilon},
\qquad H(x,t)=\Phi_t(x)
\]
is defined and holomorphic. For each $t$, $H_t:X_0\to X_t$ is a
local biholomorphism. If $H_t(x)=H_t(y)$, applying the inverse flow
$\Phi_{-t}$ gives $x=y$; thus $H_t$ is injective. Its image is open
and, because $X_0$ is compact, also compact and hence closed in the
connected manifold $X_t$. It is therefore surjective. The derivative
of $H$ is invertible on vertical tangent spaces and induces the
identity on the tangent space of the disk. Thus $H$ is a bijective
local biholomorphism, hence a biholomorphism. Taking any $t\ne0$
gives $X_0\simeq X_t\simeq S$.
\end{proof}

\section{Constructing a meromorphic product by relative cycles}
\label{sec:cycles}

\subsection{Relative cycle-space inputs}

For a proper morphism $f:M\to B$, let $\Cyc_k(M/B)$ denote the
reduced relative Barlet space of nonzero compact effective
$k$-cycles whose supports are contained in a single fiber of $f$.
Thus $\Cyc_k(M/B)$ is the space denoted by $C_k(f)$ in
\cite[Proposition~12.4.3.5]{BM2}. Excluding the zero cycle makes the
map assigning to a cycle its base point unambiguous.

\begin{proposition}\label{prop:cycle-input}
Let $M\to\D$ be a proper morphism of reduced complex spaces, and let $k\ge0$. The space
$\Cyc_k(M/\D)$ has at most countably many irreducible components. It has
a holomorphic map to $\D$ and a universal analytic family of compact
cycles; the support of that family is proper over the parameter
space. The locus of reduced irreducible cycles is analytic
Zariski-open.
\end{proposition}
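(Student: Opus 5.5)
This proposition is essentially a compilation of foundational facts from Barlet--Magn\'usson, so I would assemble it from cited results rather than prove anything new. First, I would identify $\Cyc_k(M/\D)$ with the space $C_k(f)$ of \cite[Proposition~12.4.3.5]{BM2}. That result gives $C_k(f)$ as a closed analytic subset of the absolute Barlet space $\Cyc_k(M)$ of compact $k$-cycles. It consists of those cycles whose support lies in a fibre of $f$, that is, those on which the holomorphic map $\Cyc_k(M)\to\Cyc_k(\D)$ induced by $f$ lands in the locus of $k$-cycles supported at points. Since $\Cyc_k(M)$ is a reduced complex space, it is second countable whenever $M$ is. Here $M$ is second countable because it is proper over the second countable disk. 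A second countable reduced complex space has a locally finite, hence countable, family of irreducible components. The closed analytic subset $\Cyc_k(M/\D)$ inherits this.

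Next, I would define the map to $\D$ by sending a nonzero cycle $Z$ to $f(|Z|)$; this is a single point because $Z\neq0$. For holomorphy, I would use the universal family
\[
G=\{(Z,x): x\in|Z|\}\subset \Cyc_k(M/\D)\times M.
\]
By Barlet's theory, $G$ is a closed analytic subset whose projection $G\to\Cyc_k(M/\D)$ is proper: its fibres $|Z|$ are compact, and supports vary upper semicontinuously, so properness holds because $M$ is proper over $\D$ and hence locally compact over the base. The base-point map is then $f\circ\mathrm{pr}_M$ on $G$, which is constant on the fibres of the proper surjection $G\to\Cyc_k(M/\D)$. A continuous function on the target whose pullback is holomorphic on a reduced source with proper surjective projection is weakly holomorphic. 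To conclude it is holomorphic, I would rather avoid normality issues and invoke directly the functoriality $\Cyc_k(M)\to\Cyc_k(\D)$ from \cite{BM2}. On zero-dimensional-support cycles in $\D$, the map "support point" is holomorphic via the symmetric-product structure of $\Cyc_0$ restricted to multiples of a point. This step, making the base map genuinely holomorphic rather than merely continuous, is where I expect the only real care to be needed; the fallback is simply citing the statement in \cite[Ch.~12]{BM2} that $C_k(f)$ lies over $B$.

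Finally, for the Zariski-openness of reduced irreducible cycles, I would cite the standard fact \cite[Ch.~4]{BM2} that the multiplicity and the number of irreducible components of $|Z|$ are upper semicontinuous in the analytic Zariski sense. Concretely, the locus of cycles that are either non-reduced or reducible is the image, under a proper holomorphic map, of addition
\[
\bigsqcup_{\substack{Z_1,Z_2\neq0}}\Cyc_k\times\Cyc_k\to\Cyc_k,\qquad (Z_1,Z_2)\mapsto Z_1+Z_2 .
\]
That image is closed analytic by Remmert's proper mapping theorem. Properness needs care: it holds on each compact-type piece, and local finiteness of the components lets us take the union. The complement of this closed analytic set is exactly the locus of reduced irreducible cycles, which is therefore analytic Zariski-open.
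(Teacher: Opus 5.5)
Your treatment of everything except the last assertion ends up where the paper's does. Closedness of $\Cyc_k(M/\D)$ in $\Cyc_k(M)$ and holomorphy of the base-point map both come from \cite[Proposition~12.4.3.5]{BM2}. Countability comes from second countability plus local finiteness of components.

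You should drop two of your supporting remarks, because they are wrong.
\begin{itemize}
\item Describing $\Cyc_k(M/\D)$ through $f_*:\Cyc_k(M)\to\Cyc_k(\D)$ and cycles ``supported at points'', and the symmetric-product argument for holomorphy, only make sense for $k=0$. For $k\ge1$ the direct image of a $k$-cycle lying in a fibre is the zero $k$-cycle; indeed $\Cyc_k(\D)=\{0\}$, so this map carries no information. If you want a self-contained argument instead of the citation, use traces. Take a scale adapted to $Z_0$ of degree $d>0$. For $Z$ near $Z_0$, the trace of $f^*t$ is holomorphic in $Z$, and it equals $d\cdot f(|Z|)$ whenever $|Z|$ lies in a fibre.
\item The number of irreducible components of $|Z|$ is \emph{not} upper semicontinuous. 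The cycle $[L_1]+[L_2(u)]$ with $L_2(0)=L_1$ has two support components for $u\ne0$ and only one at $u=0$. The invariant that behaves well is the weight $w(Z)=\sum_j m_j$, which is what the paper uses.
\end{itemize}

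For the Zariski-openness, the paper simply quotes upper semicontinuity of $w$ \cite[Proposition~4.7.2]{BM1} and removes the closed set $\{w\ge2\}$. Your route identifies the same set: a nonzero cycle has $w\ge2$ exactly when it is a sum of two nonzero effective cycles. You realize this set as the image of the holomorphic addition map and then apply Remmert. That is a legitimate alternative, but the step you yourself flag is not justified as written. Local finiteness of the components of the source does not give properness, since a compact set in the target could meet the images of infinitely many source components.

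What you need is the Bishop-type compactness theorem for cycles: supports in a fixed compact set and bounded volume imply relative compactness. Combine it with three further facts:
\begin{itemize}
\item continuity of volume on $\Cyc_k(M)$;
\item the inequality $\operatorname{vol}(Z_i)\le\operatorname{vol}(Z_1+Z_2)$;
\item the zero cycle is an isolated point, so limits of nonzero cycles stay nonzero.
\end{itemize}
With that input your argument goes through and exhibits the bad locus as the image of a proper map. The paper's citation reaches the same conclusion in one line, without any compactness theorem.
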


\begin{proof}
By \cite[Corollary~5.8.2.6, p.~99]{BM2}, the space $C_k(M)$ of
compact effective $k$-cycles is a reduced complex space representing
proper analytic families of such cycles. In particular, its
tautological family is analytic and its support is proper over
$C_k(M)$. The paragraph immediately following that corollary notes
that $C_k(M)$ is second countable and hence has at most countably
many irreducible components.

By \cite[Proposition~12.4.3.5, p.~623]{BM2}, the relative cycle
space $\Cyc_k(M/\D)$ is a closed analytic subspace of $C_k(M)$,
with its reduced structure, and the map sending a cycle supported
in $M_t$ to $t$ is holomorphic. As a subspace of a second-countable
space, it is second countable; the local finiteness of irreducible
components therefore gives the required countability. Restricting
the tautological family gives the relative universal family, whose
support remains proper over the parameter space by base change.

Finally, write a nonzero cycle as $Z=\sum_j m_j[Z_j]$, with
distinct irreducible supports $Z_j$ and positive multiplicities
$m_j$. Its weight $w(Z)=\sum_j m_j$ is upper semicontinuous for the
analytic Zariski topology by \cite[Proposition~4.7.2]{BM1}. Hence
$\{w\ge2\}$ is closed analytic. Since $Z$ is reduced and irreducible
exactly when $w(Z)=1$, the claimed open locus is its complement in
$\Cyc_k(M/\D)$.
\end{proof}

We will also use the fact that proper direct image preserves
analytic families of cycles; see \cite[Theorem~4.3.22]{BM1}.

\begin{proposition}\label{prop:relative-properness}
Let $f:M\to\D_1$ be a proper surjective Moishezon morphism of reduced
complex spaces. Fix $k\ge0$ and a
disk $0\in\D_0\Subset\D_1$. Then every irreducible component
of
\[
\Cyc_k(M_{\D_0}/\D_0),\qquad M_{\D_0}=f^{-1}(\D_0),
\]
is proper over $\D_0$. In particular, if a proper surjective
morphism is locally Moishezon near $0$, this conclusion holds on
a sufficiently small disk around $0$.
\end{proposition}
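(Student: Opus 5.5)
This proposition is essentially Fujiki's relative properness theorem for cycle spaces of Moishezon morphisms, so the plan is to reduce to that result. The reduction goes through a projective model and the Barlet direct-image functoriality. I would argue properness component by component.

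\textbf{Step 1: a projective model.} A Moishezon morphism $f:M\to\D_1$ is, by definition, bimeromorphic over $\D_1$ to a projective morphism. So I would fix a proper modification $\mu:\widetilde M\to M$ over $\D_1$ such that $\widetilde M\to\D_1$ factors through a projective morphism $g:P\to\D_1$. Over the relatively compact disk $\D_0$ I can take $g$ to be given by a closed embedding $P_{\D_0}\subset\mathbb P^N\times\D_0$. Shrinking from $\D_1$ to $\D_0$ is exactly what makes a single relatively ample line bundle and a finite-dimensional projective embedding available uniformly over the base.

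\textbf{Step 2: properness for the projective model.} For the projective family $\mathbb P^N\times\D_0\to\D_0$, relative cycles of bounded degree form a relative Chow scheme, which is proper over $\D_0$. On an irreducible component of the cycle space, the degree of the cycles with respect to $\mathcal O(1)$ is constant, since it is given by a continuous, integer-valued intersection number. By Bishop's theorem, cycles of bounded volume over a compact subset of the base form a compact set. Hence each component of $\Cyc_k(P_{\D_0}/\D_0)$ is proper over $\D_0$.

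\textbf{Step 3: transfer to $M$.} Given an irreducible component $\Gamma$ of $\Cyc_k(M_{\D_0}/\D_0)$, I would first pass to an open dense subset of $\Gamma$ where the cycles have no component inside the indeterminacy or exceptional loci. Next, I would transport the universal family to $P$ by strict transform under the bimeromorphic correspondence. For this I use the fact cited from \cite[Theorem~4.3.22]{BM1}, that proper direct images preserve analytic families, on the graph of $M\dashrightarrow P$. The result is a meromorphic map from $\Gamma$ to a component $\Gamma'$ of $\Cyc_k(P_{\D_0}/\D_0)$.

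Pushing forward from $P$ back to $M$ along the graph defines a holomorphic map in the other direction, taking a cycle $Z'$ to $\mu_*$ of its strict transform, plus the cycles in the exceptional set it produces. On the locus where the transform is honest, this map inverts the first one. Using it, I would bound the volume of cycles in $\Gamma$ over a compact subset $K\subset\D_0$ with respect to a fixed hermitian metric on $M_{\D_0}$. Bishop's compactness theorem in $C_k(M)$ then shows that $\Gamma|_K$ is compact, which gives properness over $\D_0$.

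\textbf{Main obstacle.} The hard part is the uniform volume bound on $\Gamma$. Near the exceptional loci the correspondence with $P$ is not a morphism, so degrees on $P$ do not directly control volumes on $M$.

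The first thing I would try is to find a class that controls volume directly on $M$. I would take a relatively ample line bundle $L$ on $P$ and pull it back to the smooth model $\widetilde M$. Its pushforward gives a closed positive $(1,1)$-current on $M$. I would then try to show that adding a multiple of a hermitian form dominated by a Kähler-type form yields a $d$-closed form $\omega$ that is strictly positive on $M_{\D_0}$. For such a form, volume is a cohomological invariant, hence constant on connected families. This is the Fujiki class $\mathcal C$ style argument.

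If that fails, the fallback is simply to cite Fujiki's theorem that relative Barlet components of a Moishezon morphism are proper over relatively compact subsets of the base. The final sentence of the statement follows by choosing $\D_1$ to be a disk on which the morphism is Moishezon and taking $\D_0\Subset\D_1$.
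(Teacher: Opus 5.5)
There is a genuine gap. Steps~1 and~2 are fine, but Step~3 does not prove properness, and you reach the statement only through your fallback citation.

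Your fix for the volume bound cannot work. A $d$-closed, strictly positive $(1,1)$-form on $M_{\D_0}$ would restrict to a K\"ahler form on every smooth fiber, and a K\"ahler Moishezon manifold is projective. Take $M=S\times\D_1$ with $S$ a smooth non-projective Moishezon manifold (Hironaka's example); this is a Moishezon morphism, and no such form exists. The push-forward of the pulled-back ample class is only a closed positive current, singular along the exceptional image. Adding a hermitian form destroys closedness, so volume is no longer cohomological.

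The ``holomorphic map in the other direction'' does not exist either. Pulling a cycle back along the modification $\widetilde M\to P$ is not a morphism of cycle spaces: at cycles meeting the center non-properly the preimage jumps in dimension, and strict transforms do not vary continuously.

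Your reduction to a dense open subset of $\Gamma$ whose cycles avoid the bad loci also fails for components all of whose members have a component inside the image of the exceptional locus of $\mu$.

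You have also misplaced the difficulty. Take $\widetilde M$ itself projective over the base, with a K\"ahler form $\widetilde\omega$ on $\widetilde M_{\D_0}$. Then, because $\mu$ is a morphism, the estimate $\mathrm{vol}_h(\mu_*\widetilde Z)\le C\,\mathrm{vol}_{\widetilde\omega}(\widetilde Z)$ over compact subsets of the base is immediate. What is missing is a family of lifts with constant upstairs volume, together with control of the cycles lying in the bad locus.

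The missing idea is to keep the positivity on a dominating model rather than manufacturing it on $M$. The paper does this by citation:
\begin{itemize}
\item Fujiki's (1.5)(3) gives, locally over $\D_1$, a proper surjective $q:Y\to M_U$ with $Y\to U$ projective.
\item Projective morphisms are pre-K\"ahlerian (Barlet--Magn\'usson, Corollary~12.2.3.3), so $f$ is weakly K\"ahlerian, and this persists over $\D_0$.
\item Barlet--Magn\'usson's Theorem~12.4.3.6 then makes every irreducible component of the relative cycle space proper over the base.
\end{itemize}
Your fallback lands in the same place, but as stated it simply cites the proposition itself. The substance of the paper's proof is the verification of the weakly K\"ahlerian hypothesis.

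If you want Step~3 to be self-contained, reverse its direction:
\begin{itemize}
\item use that $\mu_*$ is holomorphic on cycle spaces;
\item lift the general members of $\Gamma$ to an analytic family in $\widetilde M$ over a dense Zariski-open subset, where the $\widetilde\omega$-volume is constant;
\item conclude by Bishop--Barlet compactness and density;
\item treat cycles inside the exceptional image by induction on dimension.
\end{itemize}
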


\begin{proof}
 A proper Moishezon morphism
has a proper surjective dominating model which is locally
projective over the base; see \cite[(1.5)(3)]{Fujiki82}. Thus, for
each $b\in\D_1$, there is an open neighborhood $U$ of $b$ and
morphisms
\[
q:Y\longrightarrow M_U:=f^{-1}(U),\qquad
p=f|_{M_U}\circ q:Y\longrightarrow U,
\]
where $q$ is proper and surjective and $p$ is projective. By
\cite[Corollary~12.2.3.3, p.~593]{BM2}, a projective morphism is
pre-K\"ahlerian: it admits a closed relative K\"ahler form.
Consequently these local diagrams satisfy precisely the definition
of a weakly K\"ahlerian morphism.

This property is preserved by restriction to an open subset of
the base, so $f_0:M_{\D_0}\to\D_0$ is proper and weakly
K\"ahlerian. Now \cite[Theorem~12.4.3.6, p.~623]{BM2} states that
each irreducible component of the relative space $\Cyc_k(M_{\D_0}/\D_0)$ is
proper over $\D_0$.

For the last assertion, first choose a disk $\D_1$ on which the
original morphism is Moishezon, and then choose
$0\in\D_0\Subset\D_1$. All cycle-space components in the statement
are taken after restriction to this fixed disk $\D_0$.
\end{proof}

\subsection{Openness of the graph condition}

\begin{lemma}\label{lem:graph-open}
Let $P\to T$ and $Q\to T$ be proper holomorphic submersions with
connected fibers of the same dimension, where $T$ is a complex
manifold. Let $\{Z_u\}_{u\in T}$ be an analytic family of compact
cycles in $P_u\times Q_u$. If $Z_{u_*}$ is the multiplicity-one graph
of a biholomorphism $P_{u_*}\to Q_{u_*}$, then the same holds for
$Z_u$ for all $u$ in a neighborhood of $u_*$.
\end{lemma}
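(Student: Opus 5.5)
We will prove this by working locally near $u_*$ and using the projections of the cycles to the two factors. Write $\mathrm{pr}_1:P\times_T Q\to P$ and $\mathrm{pr}_2:P\times_T Q\to Q$ for the projections over $T$. By \cite[Theorem~4.3.22]{BM1}, proper direct images of analytic families of cycles are analytic. Hence $\{(\mathrm{pr}_1)_*Z_u\}_{u\in T}$ and $\{(\mathrm{pr}_2)_*Z_u\}_{u\in T}$ are analytic families of $n$-cycles in the fibers $P_u$ and $Q_u$, where $n$ is the common fiber dimension. Since $P_u$ is connected and smooth of dimension $n$, every effective $n$-cycle supported in $P_u$ is $d\,[P_u]$ for some integer $d\ge0$. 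The degree $d$ is locally constant in an analytic family over the connected manifold $T$: cycle degrees are locally constant, for instance by pairing with a fixed smooth top-degree form that restricts to a positive volume form on fibers near $u_*$. At $u_*$ both pushforwards equal $[P_{u_*}]$ and $[Q_{u_*}]$. Therefore $(\mathrm{pr}_1)_*Z_u=[P_u]$ and $(\mathrm{pr}_2)_*Z_u=[Q_u]$ for $u$ near $u_*$.

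Next we get irreducibility and multiplicity one. The condition $(\mathrm{pr}_1)_*Z_u=[P_u]$ forces $Z_u=[\Gamma_u]+Z'_u$, where $\Gamma_u$ is a reduced irreducible $n$-dimensional component mapping bimeromorphically onto $P_u$. Every other component of $Z'_u$ has image of dimension less than $n$, so it is $\mathrm{pr}_1$-contracted. We want to exclude $Z'_u$ near $u_*$. Here properness of the support helps. The support $|Z|=\bigcup_u|Z_u|$ is closed in $P\times_T Q$ and proper over $T$. The graph $\Gamma_{u_*}$ is a closed submanifold, transverse to the fibers of $\mathrm{pr}_1$, and $\mathrm{pr}_1|_{\Gamma_{u_*}}$ is an isomorphism onto $P_{u_*}$. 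By upper semicontinuity of fiber dimension of $\mathrm{pr}_1:|Z|\to P$, for $u$ near $u_*$ all fibers of $\mathrm{pr}_1|_{|Z_u|}$ are finite, because they are finite over the compact set $P_{u_*}$ and the map is proper. So no component is contracted, $Z'_u=0$, and $Z_u=[\Gamma_u]$ with $\mathrm{pr}_1:\Gamma_u\to P_u$ finite and of degree one. The same argument applied to $\mathrm{pr}_2$ shows that $\mathrm{pr}_2:\Gamma_u\to Q_u$ is also finite of degree one.

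A finite bimeromorphic map onto a normal space is an isomorphism, since its fibers are connected by Zariski's main theorem. So $\Gamma_u\to P_u$ is a homeomorphism and its inverse $g_u:P_u\to\Gamma_u$ is continuous. The inverse is holomorphic because $P_u$ is smooth and $\Gamma_u$ is a weakly normal image; more simply, $\mathrm{pr}_2\circ g_u$ is continuous and holomorphic off a proper analytic subset, so it is holomorphic by the Riemann extension theorem. Thus $\Gamma_u$ is the graph of a holomorphic map $\phi_u:P_u\to Q_u$. The symmetric argument shows $\Gamma_u$ is also the graph of a holomorphic map $Q_u\to P_u$, which is inverse to $\phi_u$. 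Hence $\phi_u$ is a biholomorphism and $Z_u$ is its multiplicity-one graph.

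The main obstacle is the second step: ruling out extra $\mathrm{pr}_1$-contracted components, and making the finiteness claim rigorous. It relies on upper semicontinuity of fiber dimension for $|Z|\to P$ together with properness of $|Z|$ over $T$, which rests on the properness of supports in Proposition~\ref{prop:cycle-input}. We will try this argument first. As a fallback, we can use the fact that the family of pushforwards already has constant multiplicity one together with the topological continuity of cycles in Barlet's topology: the supports $|Z_u|$ converge to $\Gamma_{u_*}$ in the Hausdorff sense, so they lie in a tubular neighborhood on which $\mathrm{pr}_1$ has finite fibers.
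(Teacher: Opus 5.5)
Your proof is correct and follows essentially the same route as the paper. You push the cycles forward to $P$ and $Q$ via \cite[Theorem~4.3.22]{BM1} and keep the degree in $(\mathrm{pr}_1)_*Z_u=d(u)[P_u]$ equal to one. You use semicontinuity of fiber dimension together with properness of the support over $T$ to make both projections finite near $u_*$. Finally you conclude from the fact that a finite bimeromorphic map onto a normal space is an isomorphism. The differences are minor: the paper proves finiteness of both projections before the degree count, so no component can be contracted, and it gets constancy of the degree from an adapted scale and \cite[Lemma~4.8.7(i)]{BM1}, whereas your integration of a fiberwise positive $2n$-form is an equally valid justification.
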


\begin{proof}
We adapt the graph-openness argument of
\cite[Theorem~4.10.1]{BM1} to the relative setting. Put
$n=\dim P_u=\dim Q_u$, and let
$\mathcal Z\subset P\times_T Q$ be the reduced support of the
family, with projections $e_P$ and $e_Q$. Since $P\to T$ and
$Q\to T$ are proper, both projections and $\mathcal Z\to T$
are proper. For either projection, the locus in $\mathcal Z$
where the local fiber dimension is positive is analytic. Its image
in $T$ is closed analytic by Remmert's proper mapping theorem
\cite{Remmert}. This image does not contain $u_*$, since both
projections of $|Z_{u_*}|$ are isomorphisms. After shrinking to a
connected neighborhood of $u_*$, both $e_P$ and $e_Q$ therefore
have finite fibers and hence are finite.

By \cite[Theorem~4.3.22]{BM1}, proper direct image preserves
analytic families of cycles. Thus
\[
 Y^P_u:=(e_P)_*Z_u
\]
is an analytic family of compact $n$-cycles in $P$. Since $P_u$
is connected and smooth, it is irreducible; every irreducible
component of $Z_u$ maps finitely and surjectively onto $P_u$.
Consequently
\[
 Y^P_u=d_P(u)[P_u],\qquad d_P(u)\in\mathbb Z_{\ge0}.
\]
Choose an $n$-scale $E_P$ on $P$, using local coordinates in which
$P\to T$ is a coordinate projection, such that $E_P$ is adapted
to $[P_{u_*}]$ and has degree one on this cycle. After shrinking
the parameter neighborhood, these coordinates also give
\[
 \deg_{E_P}[P_u]=1
\]
for every nearby $u$. At $u_*$ we have
$Y^P_{u_*}=[P_{u_*}]$. Since this cycle is irreducible, the single
scale $E_P$ detects its only irreducible component. We may therefore
apply \cite[Lemma~4.8.7(i)]{BM1} to the family $Y^P_u$ in
$\mathcal C_n(P)$. After a further shrinking, $E_P$ remains adapted
and its degree is constant, so
\[
 d_P(u)=\deg_{E_P}Y^P_u
       =\deg_{E_P}Y^P_{u_*}=1.
\]
The same argument for $Q$, on a common parameter neighborhood,
gives
\[
 (e_P)_*Z_u=[P_u],\qquad (e_Q)_*Z_u=[Q_u].
\]

Write $Z_u=\sum_i m_i Z_{u,i}$. Each $Z_{u,i}$ maps finitely
and surjectively to $P_u$ with a positive integral degree, and hence
\[
 1=\sum_i m_i\deg(e_P|_{Z_{u,i}}).
\]
Thus $Z_u$ has a single irreducible component with multiplicity
one, and its projection to $P_u$ has degree one. Its projection to
$Q_u$ also has degree one. A finite bimeromorphic morphism onto a
normal complex space is an isomorphism. Both projections are
therefore isomorphisms, and $Z_u$ is the multiplicity-one graph of
a biholomorphism $P_u\to Q_u$.
\end{proof}

\subsection{Selection of an extendible family of isomorphisms}

\begin{proposition}\label{prop:graph-selection}
Let $\pi:\X\to\D$ be a proper holomorphic submersion with connected
fibers, locally Moishezon near $0$. Suppose that $S$ is a smooth
compact connected Moishezon manifold and that $X_t\simeq S$ for every
$t\ne0$. After shrinking $\D$ there is a finite base change
\[
\tau:\D_s\longrightarrow\D_t,\qquad \tau(s)=s^N,
\]
and a bimeromorphic map over $\D_s$
\begin{equation}\label{eq:selected-product}
F:\X_\tau:=\X\times_{\D_t}\D_s
\dashrightarrow S\times\D_s
\end{equation}
which is biholomorphic over $\D_s^*$.
\end{proposition}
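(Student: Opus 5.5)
Consider the product family $\Y=S\times\D\to\D$ and the fiber product $M:=\X\times_\D\Y\to\D$, whose fibers are $X_t\times S$. The goal is to produce a family of graphs of isomorphisms $X_t\to S$ over a punctured disk that extends across $0$ as a cycle.

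First, $M\to\D$ is locally Moishezon near $0$. Indeed, $\X\to\D$ is locally Moishezon by hypothesis, and $S\times\D\to\D$ is Moishezon since $S$ is Moishezon. A fiber product of Moishezon morphisms is again Moishezon: take the fiber product of projective dominating models. So I fix $\D_1$ on which $M$ is Moishezon and a disk $0\in\D_0\Subset\D_1$. By Proposition~\ref{prop:relative-properness}, every irreducible component of $\Cyc_n(M_{\D_0}/\D_0)$ is proper over $\D_0$. Let $G\subset\Cyc_n(M_{\D_0}/\D_0)$ be the set of cycles that are multiplicity-one graphs of biholomorphisms $X_t\to S$. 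By Lemma~\ref{lem:graph-open}, applied over a smooth parameter space, $G$ is open. It is also closed in the Zariski-open reduced irreducible locus. Since $X_t\simeq S$ for every $t\ne0$, the image of $G$ in $\D_0$ contains $\D_0^*$.

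Second, I select a component. Proposition~\ref{prop:cycle-input} says there are countably many irreducible components, and each is proper over $\D_0$, so each has closed analytic image in $\D_0$: either a discrete set or all of $\D_0$. A countable union of discrete sets cannot cover $\D_0^*$, so some component $\Gamma$ meets $G$ and surjects onto $\D_0$. Let $\Gamma^\circ\subset\Gamma$ be the part of $\Gamma$ meeting $G$. Then $\Gamma\setminus\Gamma^\circ$ is analytic: this complement is the union of the nonreduced or reducible locus and the locus where a projection has positive-dimensional fibers, exactly as in the proof of Lemma~\ref{lem:graph-open}. Hence $\Gamma^\circ$ is Zariski-dense in $\Gamma$.

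Third, I take a resolution $\widetilde\Gamma\to\Gamma$. The map $\widetilde\Gamma\to\D_0$ is proper and surjective, and the preimage of $\Gamma\setminus\Gamma^\circ$ is a proper analytic subset. The set of $t$ over which the fiber lies entirely inside this bad locus is analytic, and it is not all of $\D_0$ because of dimension and properness. So I choose a point $\gamma_0$ over $0$ and an analytic arc $\sigma:\D_s\to\widetilde\Gamma$ with $\sigma(0)=\gamma_0$ and $\sigma(\D_s^*)$ contained in the good locus. Such an arc exists by the curve selection lemma applied to the non-discrete fiber structure. After reparametrization, the composite $\D_s\to\D_0$ equals $s\mapsto s^N$. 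Pulling back the universal family gives an analytic family of cycles $Z_s$ in $X_{s^N}\times S$, with $Z_s$ the graph of a biholomorphism $f_s$ for $s\ne0$.

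The support $\mathcal Z\subset\X_\tau\times_{\D_s}(S\times\D_s)$ is a closed analytic subset that is proper over $\D_s$, and it is irreducible because it is the closure of its part over $\D_s^*$. Over $\D_s^*$ it is the graph of the biholomorphism $F=(f_s,s)$. Both projections of $\mathcal Z$ are proper and are isomorphisms over $\D_s^*$, so $\mathcal Z$ is the graph of a bimeromorphic map $F:\X_\tau\dashrightarrow S\times\D_s$ that is biholomorphic over $\D_s^*$.

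\textbf{Main obstacle.} The hardest step is guaranteeing that the arc lies in the graph locus for $s\ne0$ while still hitting $t=0$. This rests on the good locus being Zariski-open in $\Gamma$ and on the fact that a Zariski-open dense subset of an irreducible space proper over $\D_0$ dominates a punctured neighborhood of $0$. To prove that domination, I would apply Remmert's theorem to the image of the bad locus, whose image is a proper analytic subset and hence discrete.
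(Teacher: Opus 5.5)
Your proposal is correct and follows the paper's route. The shared steps are Moishezonness of $\X\times_\D(S\times\D)$, properness of cycle-space components over $\D_0\Subset\D_1$, the countability argument giving a component $\Gamma$ that contains isomorphism graphs and surjects onto $\D_0$, an arc in a resolution $\widetilde\Gamma$ from a point over $0$ whose punctured part lies in the graph locus, and the closure of the resulting graphs. The only variation is how you show the bad locus is analytic: you take non-reduced or reducible cycles together with positive-dimensional projection fibers, which suffices because both projection degrees are constantly $1$ on the connected $\Gamma$, whereas the paper uses $B=r(\Exc(a)\cup\Exc(b))$ for a resolved dominant component of the universal support.

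Two small repairs are needed.
\begin{itemize}
\item The arc must avoid the bad locus \emph{together with} the whole fiber of $\widetilde\Gamma\to\D_0$ over $0$, as the paper does. Otherwise its composite to $\D_0$ need not be nonconstant, and you cannot normalize it to $s\mapsto s^N$. This union is still a proper analytic subset of the irreducible smooth $\widetilde\Gamma$, so a generic line through your chosen point works.
\item You do not need the remarks that $G$ is closed in the reduced irreducible locus and that the image of the bad locus in $\D_0$ is discrete. The first is unjustified. The second is false in general: for $S=\mathbb P^1\times\mathbb P^1$, degenerate limits of graphs lie over every $t$.
\end{itemize}
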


\begin{proof}
Choose disks $0\in\D_0\Subset\D_1$ such that
$\pi|_{\X_{\D_1}}:\X_{\D_1}\to\D_1$ is Moishezon. After choosing
$\D_1$ sufficiently small, there is a proper modification
$\mu:Y\to\X_{\D_1}$ for which $Y\to\D_1$ is projective; see
\cite[(1.5)(3)]{Fujiki82}. Since $S$ is smooth and Moishezon,
there is also a proper modification $\nu:S'\to S$ with $S'$
smooth and projective, by the same result over a point. Consequently
\[
\mu\times\nu:Y\times S'\longrightarrow\X_{\D_1}\times S
\]
is a proper modification, and $Y\times S'\to\D_1$ is projective.
Thus the morphism
\[
\mathcal M_1:=\X_{\D_1}\times_{\D_1}(S\times\D_1)
\longrightarrow\D_1
\]
is Moishezon. All graph cycles below are taken in
$\mathcal M_1$, whose fixed factor is $S$ itself. By
Proposition~\ref{prop:relative-properness}, every irreducible
component of $\Cyc_n((\mathcal M_1)_{\D_0}/\D_0)$ is proper over
$\D_0$, where $n=\dim S$.

From now on replace $\D$ by $\D_0$ and restrict $\X$ accordingly;
write $\mathcal M=\X\times_\D(S\times\D)$. We select a component
of the relative cycle space only after this restriction.

For each $t\ne0$, choose an isomorphism $X_t\to S$ and denote its
graph cycle by $\Gamma_t$. The cycle space
$\Cyc_n(\mathcal M/\D)$ has countably many irreducible components.
Since $\Ds$ is uncountable, one component $C$ therefore contains
$\Gamma_t$ for uncountably many distinct values of $t$.
Proposition~\ref{prop:relative-properness} implies that
\[
h:C\longrightarrow\D
\]
is proper. Its image is a closed analytic subset by Remmert's proper mapping theorem~\cite{Remmert}
and contains those uncountably many values of $t$. A proper analytic
subset of a one-dimensional disk is discrete and hence countable.
Thus $h(C)=\D$.

Take a proper resolution $\rho:T\to C$ with $T$ smooth and
irreducible, and write $g=h\circ\rho$. Pull back the universal
cycles to $T$, and let $\mathcal Z$ be their reduced universal
support. The reduced irreducible cycle locus is a nonempty analytic
Zariski-open subset of $C$, since $C$ contains an isomorphism graph;
its inverse image in $T$ is dense. Hence $\mathcal Z$ has exactly one
irreducible component $\mathcal Z^{\mathrm{dom}}$ dominating $T$.
Indeed, two distinct dominating components would give two distinct
components in a general fiber, whereas a general pulled-back cycle
is reduced and irreducible. Discard all vertical components.

Choose a point of $T$ lying over an isomorphism graph. By
Lemma~\ref{lem:graph-open}, there is a nonempty ordinary open subset
$U\subset T$ over which all cycles are multiplicity-one isomorphism
graphs. We now verify that this fiberwise assertion gives
isomorphisms of total spaces over $U$. The two projections
\[
\mathcal Z|_U\longrightarrow\X_U:=\X\times_\D U,
\qquad
\mathcal Z|_U\longrightarrow S\times U
\]
are proper and have singleton fibers, hence are finite. Their
restrictions to each parameter fiber have degree one. On each
connected component of the targets they are therefore finite
bimeromorphic maps. The targets are smooth, hence normal, and the
source has the reduced structure, so both projections are
isomorphisms. In particular, $\mathcal Z|_U$ is smooth and coincides
with $\mathcal Z^{\mathrm{dom}}|_U$.

Resolve $\mathcal Z^{\mathrm{dom}}$ without changing its smooth
locus, hence without changing it over $U$, obtaining a smooth
irreducible space $W$ and projections
\begin{equation}\label{eq:universal-projections}
a:W\longrightarrow\X_T:=\X\times_\D T,
\qquad b:W\longrightarrow S\times T.
\end{equation}
Both maps are proper: $\mathcal Z^{\mathrm{dom}}$ is closed in
$\X_T\times_T(S\times T)$, and the two projections of this fiber
product are proper. Both targets in \eqref{eq:universal-projections}
are smooth. They are connected because their bases and fibers are
connected, and hence are irreducible. Over $U$, the maps $a$ and $b$
are isomorphisms. Their images are closed analytic subsets by
Remmert's proper mapping theorem~\cite{Remmert} and contain the nonempty open subsets $\X_U$ and
$S\times U$, respectively; irreducibility of the targets makes both
maps surjective. The source and targets have the same dimension.
For each projection the generic degree is one, since the projection
is an isomorphism over the nonempty open set lying above $U$.
Thus both maps are proper bimeromorphic morphisms, that is,
modifications. Their common structure map
$r:W\to T$ is proper.

By Lemma~\ref{lem:modification}, $\Exc(a)$ and $\Exc(b)$ are closed
analytic. Hence
\begin{equation}\label{eq:bad-parameters}
B=r\bigl(\Exc(a)\cup\Exc(b)\bigr)\subset T
\end{equation}
is closed analytic. It misses $U$, so is proper. Over $T\setminus B$
neither fiber meets either exceptional locus. Hence both maps in
\eqref{eq:universal-projections} are isomorphisms there.
The proper analytic subset $B$ therefore contains all parameters
where either resolved projection fails to be an isomorphism. This
analytic control permits the curve-selection argument below.

Because $g:T\to\D$ is proper and surjective, choose $u_0\in g^{-1}(0)$.
The set $B\cup g^{-1}(0)$ is a proper analytic subset of the smooth
irreducible manifold $T$. A generic line in a coordinate chart
through $u_0$, followed by restriction to a small disk, gives an arc
\[
\gamma:(\D_s,0)\longrightarrow(T,u_0)
\]
such that
\begin{equation}\label{eq:arc}
\gamma(s)\notin B\cup g^{-1}(0)\qquad(s\ne0).
\end{equation}
To justify this choice, take a nonzero local holomorphic function
whose zero set contains the germ of that proper analytic set. Choose
a line on which the first nonzero homogeneous term of this function
does not vanish. Its restriction to the line is not identically zero,
so, after shrinking, its only zero is the origin. This proves
\eqref{eq:arc}. In particular, $g\circ\gamma$ is nonconstant and has
an isolated zero at $0$, so
\[
g(\gamma(s))=s^N v(s),\qquad v(0)\ne0.
\]
A holomorphic $N$-th root of the unit $v$, followed by a change of
source coordinate, puts this map in the form $t=s^N$. After shrinking
the source and target disks it is a finite map $\tau:\D_s\to\D_t$.

Base-change \eqref{eq:universal-projections} by $\gamma$. Over
$\D_s^*$ the two base-changed maps are isomorphisms by
\eqref{eq:arc}. The punctured pullback is isomorphic to
$\X_\tau|_{\D_s^*}$, which is smooth and connected, and hence is
irreducible.
Let $W_\gamma$ be the reduced closure of this punctured pullback, or
equivalently the unique component dominating $\D_s$. The resulting
projections
\[
\X_\tau\xleftarrow{\ a_\gamma\ }W_\gamma
\xrightarrow{\ b_\gamma\ }S\times\D_s
\]
are proper and are isomorphisms over $\D_s^*$. Their images are
closed analytic subsets containing the dense open punctured targets,
so both maps are surjective. The targets are smooth and irreducible;
for $\X_\tau$ this follows directly from submersion coordinates for
$\pi$. Thus $a_\gamma$ and $b_\gamma$ are proper bimeromorphic
morphisms, hence modifications. The composition
$b_\gamma\circ a_\gamma^{-1}$ defines \eqref{eq:selected-product}
and is biholomorphic over $\D_s^*$.
\end{proof}

\section{Proofs of the rigidity theorems}

\begin{proof}[Proof of Theorem~\ref{thm:local-moishezon}]
Since $\kappa(S)\ge0$, choose $m_0>0$ such that $P_{m_0}(S)>0$.
Upper semicontinuity for the relative line bundle
$K_{\X/\D}^{\otimes m_0}$ gives
\begin{equation}\label{eq:semicontinuity}
P_{m_0}(X_0)\ge
\limsup_{t\to0,\ t\ne0}P_{m_0}(X_t)
=P_{m_0}(S)>0.
\end{equation}

Apply Proposition~\ref{prop:graph-selection}. For a finite base change
$\tau(s)=s^N$ it gives a bimeromorphic map
\[
F:\widehat\X:=\X\times_\D\D_s\dashrightarrow S\times\D_s
\]
which is biholomorphic over $\D_s^*$. The base-changed family
$\widehat\pi:\widehat\X\to\D_s$ is smooth and proper. In local
submersion coordinates $\pi(z,t)=t$, its fiber product has coordinates
$(z,s)$ with $t=s^N$, so even at $s=0$ it is a coordinate projection.
Moreover,
\begin{equation}\label{eq:same-center}
\widehat X_0\simeq X_0.
\end{equation}
Equations~\eqref{eq:semicontinuity} and \eqref{eq:same-center} verify
the plurigenus hypothesis in Proposition~\ref{prop:criterion}.
That proposition gives $\widehat X_0\simeq S$, hence $X_0\simeq S$.

\end{proof}

\end{document}